\theoremstyle{thmstyleone}%
\newtheorem{theorem}{Theorem}
\theoremstyle{thmstyletwo}%
\newtheorem{corollary}{Corollary}%
\theoremstyle{thmstylethree}%
\newcommand\blfootnote[1]{%
  \begingroup
  \renewcommand\thefootnote{}\footnote{#1}%
  \addtocounter{footnote}{-1}%
  \endgroup
}
\begin{document}

\title{On the accurate computation of the Newton form  of the Lagrange interpolant}


%
%
%
%
%

\author[1]{\fnm{Y.} \sur{Khiar}}\email{ykhiar@unizar.es}

\author[1]{\fnm{E.} \sur{Mainar}}\email{esmemain@unizar.es}

\author*[2]{\fnm{E.} \sur{Royo-Amondarain}}\email{eduroyo@unizar.es}

\author[1]{\fnm{B.} \sur{Rubio}}\email{brubio@unizar.es}

\affil[1]{\orgdiv{Departamento de Matem\'{a}tica Aplicada/IUMA}, \orgname{Universidad de Zaragoza}, \orgaddress{\city{Zaragoza}, \country{Spain}}}

\affil[2]{\orgdiv{Departamento de Matem\'{a}ticas}, \orgname{Universidad de Zaragoza}, \orgaddress{\city{Zaragoza}, \country{Spain}}}


\abstract{In recent years many efforts have been devoted to finding bidiagonal factorizations of nonsingular totally positive matrices, since their accurate computation allows to numerically solve several important algebraic problems with great precision, even for large ill-conditioned matrices. In this framework, the present work provides the factorization of the collocation matrices of Newton bases---of relevance when considering the Lagrange interpolation problem---together with an algorithm that allows to numerically compute it to high relative accuracy. This further allows to determine the coefficients of the interpolating polynomial and to compute the singular values and the inverse of the collocation matrix. Conditions that guarantee high relative accuracy for these methods and, in the former case, for the classical recursion formula of divided differences, are determined.
Numerical errors due to imprecise computer arithmetic or perturbed input data in the computation of the factorization are analyzed.
Finally, numerical experiments illustrate the accuracy and effectiveness of the proposed methods with several algebraic problems, in stark contrast with traditional approaches.
\blfootnote{\textit{Preprint submitted to Numerical Algorithms on November 22, 2023}}}

\keywords{High relative accuracy, Bidiagonal decompositions, Totally positive matrices, Lagrange interpolation, Newton basis, Divided differences}


\pacs[MSC Classification]{65F15, 65F05, 15A23, 15A18, 15A06}

\maketitle

\section{Introduction}\label{sec1}

A classical approach to the Lagrange interpolation problem is the Newton form, in which the polynomial interpolant is written in terms of the Newton basis. Its coefficients, called divided differences, can be determined both by means of a recursive formula or  by solving a linear system that involves the collocation matrix of the basis.
The recursive computation of divided differences requires many subtractions which can lead to cancellation: an error-inducing phenomenon that takes place when two nearly equal numbers are subtracted and that usually elevates the effect of earlier errors in the computations.
In fact, this is also the case of the notoriously ill-conditioned collocation matrices that are associated with the Lagrange interpolation problem---these errors can heavily escalate when the number of nodes increases, eventually turning unfeasible the attainment of a solution in any algebraic problem involving these matrices.
It is worth noting that sometimes the errors in computing the divided differences can be ameliorated by considering different node orderings \cite{Fischer1989,Tal-Ezer1991}, although this strategy may not be sufficient to obtain the required level of precision in a given high-order interpolation problem.

However, in some scenarios it is possible to keep numerical errors under control. In a given floating-point arithmetic, a real value is said to be determined to high relative accuracy (HRA) whenever the relative error of the computed value is bounded by the product of the unit round-off and a positive constant, independent of the arithmetic precision. HRA implies great accuracy in the computations since the relative errors have the same order as the machine precision and the accuracy is not affected by the dimension or the conditioning of the problem to be solved. A sufficient condition to assure that an algorithm can be computed to HRA is the non inaccurate cancellation condition. Sometimes denoted as NIC condition, it is satisfied if  the algorithm does not require  inaccurate subtractions and only evaluates products, quotients, sums of numbers of the same sign, subtractions of numbers of opposite sign or subtraction of initial data (cf.~\cite{Koev1}, \cite{Koev4}). In this sense, in this paper we provide the precise conditions under which the recursive computation of divided differences can be performed to HRA---these include strictly ordered nodes and a particular sign structure of the node values of the interpolated function.

In the research of algorithms that preserve HRA, a major step forward was given in the seminal work of Gasca and Pe\~na for nonsingular totally positive matrices, since these can be written as a product of bidiagonal matrices \cite{Gasca3}. This factorization can be seen as a representation of the matrices exploiting their total positivity property to achieve accurate numerical linear algebra since, if it is provided to HRA, it allows solving several algebraic problems involving the collocation matrix of a given basis to HRA. In the last years, the search for bidiagonal decompositions of different totally positive bases has been a very active field of research \cite{dop1,dop2,Koev1,Demmel2,MPRJSC,MPRAdvances,MPRRACSAM}.

The present work can be framed as a contribution to the above picture for the particular case of the well-known Newton basis and its collocation matrices. The conditions guaranteeing their total positivity are derived and a fast algorithm to obtain their  bidiagonal factorization to HRA is provided. Under these conditions, the obtained bidiagonal decomposition can be applied to compute to HRA the singular values, the inverse, and also the solution of some linear systems of equations. As will be illustrated,   the accurate resolution of these systems provides an alternative procedure to calculate divided differences to high relative accuracy.
 
In order to make this paper as self-contained as possible, Section \ref{Section:notations} recalls basic concepts and  results related to total positivity,  high relative accuracy and interpolation formulae.  Section \ref{sec:NewtonsForm} focuses on the recursive computation of divided differences  providing conditions  for their computation to high relative accuracy.  In Section \ref{sec:BDANewton},  the bidiagonal factorization of collocation matrices of Newton bases is  derived and the analysis of their total positivity is performed.   A fast algorithm for the computation of the bidiagonal factorization is provided in Section \ref{sec:error}, where the numerical errors appearing in  a floating-point arithmetic   are also studied and   a structured condition number for  the considered matrices is deduced. Finally, Section \ref{sec:experimentos} illustrates the numerical  performed experimentation.

\section{Notations and auxiliary results} \label{Section:notations}

 Let us recall that a matrix is  totally positive (respectively, strictly totally positive) if all its minors are nonnegative (respectively, positive). 
By Theorem 4.2 and the arguments of p.116  of \cite{Gasca3},   a nonsingular    totally positive  $A\in {\mathbb R}^{(n+1)\times(n+1)}$
 can be written as follows,
 \begin{equation}\label{BDAfac}
A=F_nF_{n-1}\cdots F_1 D G_1 G_2\cdots   G_n,
\end{equation}
 where $F_i\in {\mathbb R}^{(n+1)\times(n+1)}$ and $G_i\in {\mathbb R}^{(n+1)\times(n+1)}$, $i=1,\ldots,n$,  are the totally positive,  lower and upper triangular bidiagonal matrices described by
\begin{equation}  
\label{fgi}
{
   F_i=\left(\begin{array}{cccccccccc}
1  \\
   &   & \ddots   \\
   &   &        & 1 &      \\
       &        &    &  m_{i+1,1}         &  &  1   \\
    &   &        &    &           &        \ddots        & \ddots &       \\
&     &        &    &           &         &   m_{n+1,n+1-i}  &  1
    \end{array} \right),  \,
G_i^T=\left(\begin{array}{cccccccccc}
1  \\
   &   & \ddots   \\
   &   &        & 1 &      \\
       &        &    &  \widetilde{m}_{i+1,1}         &  &  1   \\
    &   &        &    &           &        \ddots        & \ddots &       \\
&     &        &    &           &         &   \widetilde{m}_{n+1,n+1-i}  &  1
    \end{array} \right)}
\end{equation}  
and $D\in {\mathbb R}^{(n+1)\times(n+1)}$ is a diagonal matrix with positive diagonal entries. 

In \cite{Koev4}, the bidiagonal factorization \eqref{BDAfac} of a  nonsingular and   totally positive $A\in {\mathbb R}^{(n+1)\times(n+1)}$ is represented by defining a matrix $BD(A)=(BD(A)_{i,j})_{1\le i,j\le n+1}$ such that 
\begin{equation}\label{eq:BDA}
 BD(A)_{i,j}:=\begin{cases} m_{i,j}, & \textrm{if } i>j, \\[5pt]  p_{i,i}, & \textrm{if } i=j, \\[5pt]  \widetilde{m}_{ j ,i}, & \textrm{if } i<j.    \end{cases}
\end{equation}
This representation  will allow us to define algorithms adapted to the totally positive structure, providing accurate computations with  $A$. 
 
If  the bidiagonal factorization \eqref{BDAfac} of a nonsingular and  totally positive matrix $A$ can be computed to high relative accuracy, the computation of   its eigenvalues and singular values, the computation of   $A^{-1}$ and even the resolution of systems of linear equations  $Ax=b$, for vectors $b$ with alternating signs, can be also computed to high relative accuracy using the algorithms provided in \cite{Koev2}.

Let  $(u_0, \ldots, u_n)$ be  a basis  of a space $U$ of  functions defined on $I\subseteq R$.
Given  a sequence of parameters
  $ t_1<\cdots<t_{n+1} $  on $I$,  the corresponding  collocation matrix   is defined by
\begin{equation} \label{eq:MC}
M\left( t_1,\ldots , t_{n+1} \right)  :=\big(  u_{j-1}(t_i) \big)_{1\le i,j\le n+1}.
\end{equation}

 Let   ${\mathbf P}^n(I)$ be  the  $(n+1)$-dimensional space formed by all polynomials of degree not greater than $n$,  in a variable defined on  $I\subseteq\mathbb R$, that is,
 $$ {\mathbf P}^n(I):=\textrm{span}\{1, t, \ldots, t^n\},\quad   t\in I.
 $$
 Given nodes $t_1, \ldots ,t_{n+1}$ on $I$ and a function $f:I\to\mathbb R$, we are going to address the Lagrange interpolation problem for finding   $p_n\in {\mathbf P}^n(\mathbb R)$  such that 
 $$
 p_n(t_i)=f(t_i),\quad i=1,\ldots,n+1.
 $$
When considering the monomial basis  $(m_{0} ,\ldots,m_{n})$, with $m_i(t)=t^i$, for $i=0,\ldots,n$,  the interpolant  can be written as follows,
 \begin{equation} \label{eq:MonomialBasis}
 p_n(t)=\sum_{i=0}^n c_{i+1} m_i(t),
 \end{equation}
and the coefficients $c_i$, $i=1,\ldots,n+1$, form the solution vector $c=(c_1,\ldots,c_{n+1})^T$ of the linear system
$$
Vc=f,
$$
where $f:=(f(t_1),\ldots,f(t_{n+1}))^T$ and $V\in \mathbb R^{(n+1)\times(n+1)} $ is the collocation matrix  of the monomial basis at the nodes $t_i$, $i=1,\ldots,n+1$, that is,   
\begin{equation}\label{eq:V}
V :=\left( t_{i}^{j-1} \right)_{1\le i,j \le n+1}.
\end{equation}
Let us observe that $V$ is the Vandermonde matrix at the considered nodes and recall that Vandermonde matrices   have relevant  applications in linear interpolation
and numerical quadrature (see for example \cite{Finck} and \cite{Oruc}). In fact, 
for any increasing sequence of positive values,  $0 <t_1< \cdots<t_{n+1}$, the corresponding Vandermonde matrix $V$  in \eqref{eq:V}
  is known to be strictly totally positive (see Section 3 of  \cite{Koev4}) and  so,
 we can write
 \begin{equation}\label{eq:c}
{c}= V^{-1}f.
 \end{equation}
 
In \cite{Koev4} or Theorem 3 of \cite{MPRNLA} it is shown that the Vandermonde matrix $V$ admits a bidiagonal factorization of the form \eqref{BDAfac}:
  \begin{equation}\label{eq:BDAVandermonde}
  V =  F_nF_{n-1}\cdots F_1 D G_1 G_2\cdots   G_n,
 \end{equation}
 where  $F_i\in {\mathbb R}^{(n+1)\times(n+1)}$ and $G_i\in {\mathbb R}^{(n+1)\times(n+1)}$, $i=1,\ldots,n$,  are the  lower and upper triangular bidiagonal matrices described by 
\eqref{fgi} with 
\begin{equation}\label{eq:mijVan}
m_{i,j}=  \prod_{k=1}^{j-1}  \frac{t_{i } - t_{i-k }  }{t_{i-1} -t_{i-k-1}  },         \quad  \tilde m_{i,j}=t_j,\quad 1\le j< i \le n+1,
\end{equation}
and $D$ is the diagonal matrix whose entries are 
\begin{equation}\label{eq:diiVan}
d_{i,i}=   \prod_{k=1}^{i-1} (t_i - t_k),\quad  i=1,\ldots,n+1.
\end{equation}
Using Koev's notation,  this factorization of $V$ can be represented  through the matrix $BD(V)\in\mathbb R^{(n+1)\times (n+1)}$ with 
 \begin{equation}\label{eq:BDV}
 BD(V )_{i,j}:=\begin{cases}
    t_{i }  , & \textrm{if } i<j,   \\[7pt] 
  \prod_{k=1}^{i-1} (t_i - t_k), & \textrm{if } i=j,  \\[7pt]  
 \prod_{k=1}^{j-1}  \frac{t_{i } - t_{i-k }  }{t_{i-1} -t_{i-k-1}  }, & \textrm{if } i>j.
 \end{cases}
\end{equation}
 
Moreover, it can be easily checked that the computation of  $BD(V )$ does not require inaccurate cancellations and can be performed to high relative accuracy.  
 
When considering interpolation nodes $t_1,\ldots, t_{n+1}$ such that $t_i\ne t_j$  for $i\ne j$, and the corresponding  Lagrange polynomial basis  $(\ell_0,\ldots, \ell_n)$, with 
\begin{equation}\label{eq:LagBasis}
\ell_i(t):=\prod_{j\ne i} \frac{t-t_{j+1}}{t_{i+1}-t_{j+1}},\quad i=0,\ldots,n,
\end{equation}
the Lagrange formula of the polynomial interpolant $p_n$  is 
\begin{equation}\label{eq:Lag}
p_n(t)=\sum_{i=0}^n f(t_{i+1}) \ell_i(t).    
\end{equation}
Taking into account   \eqref{eq:Lag},   \eqref{eq:MonomialBasis}  and \eqref{eq:c}, we have
$$
p_n(t)= (\ell_0(t),\ldots, \ell_n(t)) { f}= (m_0(t),\ldots, m_n(t)) V^{-1} { f}, 
$$
with $f=(f(t_1),\ldots,f(t_{n+1}))^T$, and  deduce that 
\begin{equation}\label{eq:m=lV}
(m_0,\ldots, m_n) = (\ell_0,\ldots, \ell_n) V.
\end{equation}
Let us note that identity \eqref{eq:m=lV}  means that the Vandermonde matrix $V\in\mathbb R^{(n+1)\times (n+1)}$ is the change of basis matrix between the $(n+1)$-dimensional monomial and  Lagrange basis of the polynomial space ${\mathbf P}^n(\mathbb R)$ corresponding to the considered interpolation nodes.

 The Lagrange's interpolation formula \eqref{eq:Lag} is usually considered  for small numbers of  interpolation nodes,  due to  certain shortcomings claimed such as:
\begin{itemize}
\item The evaluation of the interpolant $p_n(t)$ requires  $O(n^2)$ flops.
\item   A new computation from scratch is needed  when adding a new interpolation data pair.
\item The computations are numerically unstable.
\end{itemize}
Nevertheless, as explained in \cite{Berrut}, the Lagrange formula can be improved taking into account the following identities, 
\begin{equation}\label{eq:LagImp}
p_n(t)=\sum_{i=0}^n f(t_{i+1}) \ell_i(t)=\ell(t) \sum_{i=0}^n f(t_{i+1}) \frac{\omega_{i+1}}{t-t_{i+1} },    
\end{equation}
where 
$$
\ell(t) :=\prod_{i=0}^n (t-t_{i+1}), \quad \omega_i: =1/\prod_{k\ne i} (t_{i}-t_{k})=1/\ell'(t_i), \quad i=1,\ldots,n+1.
$$
Furthermore,  since $\sum_{i=0}^n\ell_i(t)=1$, for all $t$, the following barycentric formula can be derived for the Lagrange interpolant
\begin{equation}\label{eq:LagBaryc} 
p_n(t)=\frac{ 
 \sum_{i=0}^n 
  f(t_{i+1})  
  \frac{ \omega_{i+1} }{ t-t_{i+1} } } {  \sum_{i=0}^n  \frac{\omega_{i+1}}{t-t_{i+1} }  }.  
\end{equation}
Let us observe that  any common factor in the values $\omega_j$, $i=1,\ldots,n+1$, can  be cancelled without changing the value of the interpolant $p_n(t)$. This  property is used in \cite{Berrut} to derive interesting properties of the barycentric formula \eqref{eq:LagBaryc} for the interpolant.

\section{Accurate computation of  divided differences} \label{sec:NewtonsForm}

Instead of Lagrange's formulae \eqref{eq:LagImp} and \eqref{eq:LagBaryc}, one can use 
 the Newton  form  of the interpolant, which is obtained when the interpolant is written in terms of the Newton basis $(w_0,\ldots,w_n)$ determined by the interpolation  nodes $t_1,\ldots,t_{n+1}$, 
\begin{equation} \label{eq:NewtonBasis}
w_0(t):=1,\quad  w_i(t):= \prod_{k=1}^i t-t_k ,\quad i=1,\ldots,n,
\end{equation}
 as follows,  
 \begin{equation} \label{eq:pol}
p_n(t)=\sum_{i=0}^{n}[t_1,\ldots,t_{i+1} ]f \,w_i(t),
\end{equation}
where $[t_1,\ldots,t_i ]f$ denotes the divided difference   of the interpolated function $f$ at the nodes $t_1,\ldots,t_i$.
If $f$ is $n$-times continuously differentiable on $[t_1,t_{n+1}]$, the divided differences  $[t_1,\ldots,t_i ]f $, $i=1,\ldots,n+1$, can be obtained using the following recursion
\begin{equation}\label{eq:divideddif}
 [t_i, \ldots, t_{i+k}]f =\begin{cases} 
      \frac{   [t_{i+1},  \ldots, t_{i+k}]f- [t_i,\ldots, t_{i+k-1}]f}{t_{i+k}-t_i},  & \textrm{if }  t_{i+k}\ne t_i,\\[5pt]
          \frac{  f^{(k)}(t_i)}{ k! },& \textrm{if }  t_{i+k}= t_i.\end{cases}
\end{equation}
Note that the divided differences   only depend on   the interpolation nodes and, once  computed, the interpolant   \eqref{eq:pol} can be evaluated in $O(n)$ flops per evaluation. 

For a given sequence $d_i := f(t_i)$,  $i=1,\ldots,n+1$, let us define
\begin{equation}\label{eq:dik}
d_{i,k} := [t_i,\dots,t_{i+k}]f, \quad k=0,\dots, n,\quad  i=1,\dots,n+1-k.
\end{equation}
The following result shows that the computation of  $d_{i,k}$ using the recursion \eqref{eq:divideddif}    satisfies the NIC condition for ordered sequences of  nodes.

\begin{theorem}\label{theo:divdif}
  Let $t_1,\dots , t_{n+1}$ be nodes in strict increasing (respectively, decreasing) order and   $d_1,\dots, d_{n+1}$ given values having alternating signs. Then, the values   
\begin{equation}\label{eq:recdik}
d_{i,0}:=d_i, \; i=1,\dots,n+1, \quad     d_{i,k} = \frac{   d_{i+1,i+k} - d_{i, i+k-1} }{ t_{i+k}-t_i},  \; k=1,\dots, n, \; i=1,\dots,n+1-k,
\end{equation}
can be computed to high relative accuracy.
\end{theorem}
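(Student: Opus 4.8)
The plan is to show that the recursion \eqref{eq:recdik} never performs an operation forbidden by the NIC condition recalled in Section~\ref{Section:notations}, so that high relative accuracy follows directly from that criterion. Each step of the recursion consists of one subtraction in the numerator, one subtraction in the denominator, and one quotient. The quotient is harmless, and the denominator $t_{i+k}-t_i$ is a subtraction of two of the original nodes; since the nodes are \emph{initial data}, this subtraction is permitted by the NIC condition no matter how close the nodes lie. The whole difficulty therefore concentrates on the numerator $d_{i+1,k-1}-d_{i,k-1}$, a subtraction of two \emph{computed} quantities, which would ruin accuracy only if the two operands shared the same sign and were nearly equal.

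The key step is an auxiliary sign lemma, proved by induction on $k$: for strictly increasing nodes and initial values of alternating sign, the computed divided differences satisfy $\operatorname{sign}(d_{i,k})=(-1)^{i+k}\sigma$ for a fixed $\sigma\in\{-1,+1\}$ determined by $\operatorname{sign}(d_1)$. The base case $k=0$ is exactly the alternating-sign hypothesis on $d_1,\dots,d_{n+1}$. For the inductive step, the hypothesis forces $d_{i+1,k-1}$ and $d_{i,k-1}$ to carry opposite signs, since their first indices differ by one; hence the numerator is a subtraction of numbers of opposite sign, which is admissible, and its sign coincides with that of $d_{i+1,k-1}$, namely $(-1)^{i+k}\sigma$. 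Dividing by the positive quantity $t_{i+k}-t_i$ preserves this sign and yields the claimed pattern for $d_{i,k}$, closing the induction.

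With the sign lemma in hand the conclusion is immediate: at every node of the recursion the numerator subtracts two quantities of opposite sign and the denominator subtracts two initial data, so no inaccurate cancellation ever occurs and the NIC condition holds throughout. The strictly decreasing case is handled by the same induction, the only change being that now $t_{i+k}-t_i<0$; one checks that the resulting sign pattern becomes independent of $k$ (it alternates with $i$ only), but the crucial feature---that consecutive entries $d_{i,k-1}$ and $d_{i+1,k-1}$ in a fixed column carry opposite signs---persists, so the numerator remains an opposite-sign subtraction. I expect the main obstacle to be the bookkeeping in this induction: one must verify that the sign attributed to each \emph{computed} operand is precisely the sign with which the opposite-sign subtraction resolves in floating point, rather than merely a sign known a priori, since it is this resolved sign that feeds the next level of the recursion and must be tracked correctly for the argument to be valid for the actual computation and not only for exact arithmetic.
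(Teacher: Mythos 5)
Your proposal is correct and follows essentially the same route as the paper: an induction on $k$ showing that each column of divided differences retains the alternating-sign pattern, so the numerator is always a subtraction of numbers of opposite sign while the denominator subtracts only initial data, and the NIC condition then gives high relative accuracy. Your explicit sign formula $(-1)^{i+k}\sigma$ and the separate treatment of the decreasing case are just a more detailed bookkeeping of the same invariant the paper propagates.
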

\begin{proof}
We proceed by induction on $k$. Under the considered hypotheses, the result follows for $k=0$.
Now, let us assume that the claim holds for $k$. Then, by \eqref{eq:recdik}, we can write
$$
d_{i,k+1}=\frac{d_{i+1,k}-d_{i,k}}{t_{i+k+1}-t_i},\quad
d_{i+1,k+1}=\frac{d_{i+2,k}-d_{i+1,k}}{t_{i+k+2}-t_{i+1}},
$$
and, by the induction hypothesis,  deduce that the values $d_{i+2,k}$ and $d_{i,k}$ have different sign than   $d_{i+1,k}$. Now, taking into account that the nodes $t_i$ are in strict  order,  we derive that the elements  $d_{i,k+1}$ and $d_{i+1,k+1}$ have alternating signs.
  
On the other hand,  the computation to high relative accuracy  of  the quantity $d_{i,k}$
can be guaranteed since it is computed as a quotient whose numerator is a sum of numbers of the same sign, and
 the subtractions in the denominator involve only initial data and so, will not lead to subtractive cancellations.
\end{proof}

Note that the previous result provides the conditions on a function $f$ for the computation to high relative accuracy of the coefficients of the Newton form  \eqref{eq:NewtonBasis} of its Lagrange polynomial interpolant.  
 \begin{corollary}\label{cor:divdif}
  Let $t_1,\dots , t_{n+1}$ be nodes in strict increasing (respectively, decreasing) order and $f$ a function such that the entries of the vector $ (f(t_1),\dots,f(t_{n+1}))$ can be computed to high relative accuracy and have alternating signs. Then, the divided differences  
 $ [t_i, \ldots, t_{i+k}]f$, $k=0,\dots, n$, $i=1,\dots,n+1-k$,   can be computed to high relative accuracy using \eqref{eq:divideddif}.
 \end{corollary}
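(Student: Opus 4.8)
The plan is to reduce the statement directly to Theorem~\ref{theo:divdif}. First I would set $d_i := f(t_i)$ for $i=1,\dots,n+1$ and observe that, by hypothesis, these seed values can be computed to high relative accuracy and have alternating signs---precisely the two assumptions demanded by Theorem~\ref{theo:divdif}. With this identification, the quantities $d_{i,k}$ introduced in \eqref{eq:dik} are exactly the divided differences $[t_i,\dots,t_{i+k}]f$ that the corollary addresses, so nothing remains to be defined.

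Next I would verify that, under the present hypotheses, the recursion \eqref{eq:divideddif} coincides with the recursion \eqref{eq:recdik} analyzed in the theorem. Because the nodes are in strict increasing (or decreasing) order, they are pairwise distinct, so $t_{i+k}\neq t_i$ for every $k\ge 1$; hence the first branch of the piecewise definition \eqref{eq:divideddif} is always the one that applies, and the confluent case $f^{(k)}(t_i)/k!$ never arises. That branch is literally the formula appearing in \eqref{eq:recdik}, so computing the divided differences via \eqref{eq:divideddif} performs the very same sequence of arithmetic operations as the recursion to which Theorem~\ref{theo:divdif} applies.

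With these two identifications in place, the conclusion is immediate: Theorem~\ref{theo:divdif} guarantees that the $d_{i,k}$ are obtained to high relative accuracy whenever the initial data $d_i$ are available to high relative accuracy and alternate in sign. The only ingredient the corollary adds beyond the theorem is the requirement that the values $f(t_i)$---rather than arbitrary prescribed numbers---be computable to high relative accuracy; this simply furnishes the accurate base case $d_{i,0}=f(t_i)$ from which the induction in Theorem~\ref{theo:divdif} proceeds. I do not expect a genuine obstacle, as the corollary is a specialization of the theorem; the one point that warrants explicit mention is the reduction of \eqref{eq:divideddif} to \eqref{eq:recdik} through the distinctness of strictly ordered nodes, which rules out confluent divided differences and thereby keeps the computation within the NIC-preserving recursion already shown to be accurate.
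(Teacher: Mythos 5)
Your argument is correct and matches the paper's intent: the corollary is presented there without an explicit proof, as an immediate specialization of Theorem~\ref{theo:divdif} obtained by taking $d_i=f(t_i)$, and your reduction---including the observation that strictly ordered nodes are distinct, so only the non-confluent branch of \eqref{eq:divideddif} is ever invoked and it coincides with \eqref{eq:recdik}---is exactly the right way to make that specialization explicit.
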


Section \ref{sec:experimentos} will illustrate  the accuracy obtained using the recursive computation of divided differences and compare the results with those obtained through an alternative method proposed in the next section.

 \section{Total positivity and bidiagonal factorization of collocation matrices of Newton bases} \label{sec:BDANewton}

Let us observe that, since the polynomial $m_i(t)=t^i$, $i=0,\ldots,n$,    coincides with its   interpolant at $t_1,\ldots, t_{n+1}$, taking into account the Newton  formula  \eqref{eq:pol} for  the monomials $m_i$, $i=0,\ldots,n$, we deduce that 
 \begin{equation}\label{eq:changemN}
 (m_0,\ldots,m_n)=(w_0,\ldots,w_n)U,
  \end{equation}
where the change of basis matrix  $U=(u_{i,j})_{1\le i,j \le n+1}$ is upper triangular and   satisfies   
 $ u_{i,j}= [t_1,\ldots, t_{i}]m_{j-1}$,
 that is,
\begin{equation}\label{eq:U}
U=\left(     \begin{array}{ccccc}  
 1 & [t_1]m_1  &  [t_1]m_2 & \cdots & [t_1]m_n \\   
 0 & 1 & [t_1,t_2]m_2 & \cdots & [t_1,t_2]m_n \\         
 0 & 0 & 1  & \ddots &    \vdots  \\   
 \vdots &  \vdots  &\ddots   & \ddots & [t_1,\ldots,t_{n}]m_n  \\   
 0 &0 &\cdots  & 0 & 1      
        \end{array} \right).
\end{equation}
On the other hand, the  collocation matrix of the Newton basis $(w_0,\ldots,w_n)$  \eqref{eq:NewtonBasis} at the interpolation nodes $t_1,\ldots,t_{n+1}$   is a lower triangular matrix $L=(l_{i,j})_{1\le i,j\le n+1}$ whose entries are
\begin{equation}\label{eq:ColNewton}
  l_{i,j}= w_{j-1}(t_i)= \prod_{k=1}^{j-1}  (t_i-t_k).
\end{equation}
Taking into account \eqref{eq:m=lV} and \eqref{eq:changemN},  we obtain the following Crout factorization of Vandermonde matrices at nodes $t_{i}$, $i=1,\ldots,n+1$, with $t_i\ne t_j$ for $i\ne j$,
\begin{equation}\label{CroutV}
V= M \left[ \begin{array}{c} \ell_0,\ldots, \ell_n \\t_1,\ldots,t_{n+1}\end{array} \right] V= M \left[ \begin{array}{c} m_0,\ldots,m_n \\t_1,\ldots,t_{n+1}\end{array} \right]= M \left[ \begin{array}{c} w_0,\ldots,w_n \\t_1,\ldots,t_{n+1}\end{array} \right]U=LU,
\end{equation}
  where $L$ is the lower triangular collocation matrix of the Newton basis $(w_0,\ldots,w_n)$ and $U$ is the upper triangular change of basis matrix  satisfying \eqref{eq:changemN}.

The following result deduces the bidiagonal factorization of the collocation matrix $L$   of the Newton basis. 

\begin{theorem} \label{thm:BDAL} Given interpolation nodes $t_1,\ldots, t_{n+1}$, with $t_i\ne t_j$ for $i\ne j$,
let $L\in{\mathbb R}^{(n+1)\times(n+1)}$  be the collocation matrix described by \eqref{eq:ColNewton} of the Newton basis \eqref{eq:NewtonBasis}. Then, 
   \begin{equation}\label{eq:BDLNewton}
L=  F_n \cdots F_{1}D, 
\end{equation}
where   $F_i\in{\mathbb R}^{(n+1)\times(n+1)}$, $i=1,\ldots,n$, are lower triangular bidiagonal matrices whose structure is described by \eqref{fgi} and their off-diagonal  entries   are  
\begin{equation}       
   {m}_{i,j} = \prod_{k=1}^{j-1}  \frac{t_{i } - t_{i-k }  }{t_{i-1} -t_{i-k-1}  },       \quad 1\le j< i \le n+1,   \label{eq:mij}  
\end{equation}
and $D\in{\mathbb R}^{(n+1)\times(n+1)}$ is the diagonal matrix $D=\textrm{diag}(d_{1,1},\ldots, d_{n+1,n+1})$ with
\begin{equation}       
 {d}_{i,i} = \prod_{k=1}^{i-1} (t_i - t_k),       \quad 1\le   i \le n+1.   \label{eq:dii}  
\end{equation}
  \end{theorem}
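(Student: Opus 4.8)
The plan is to avoid recomputing anything from scratch and instead to recognize \eqref{eq:BDLNewton} as the lower-triangular half of a factorization already available. The starting point is the Crout factorization $V=LU$ in \eqref{CroutV}, in which $L$ is the Newton collocation matrix \eqref{eq:ColNewton}---lower triangular with diagonal entries $l_{i,i}=w_{i-1}(t_i)=\prod_{k=1}^{i-1}(t_i-t_k)$---and $U$ is the change-of-basis matrix \eqref{eq:U}, upper triangular with unit diagonal. Alongside this I would place the bidiagonal factorization of the Vandermonde matrix \eqref{eq:BDAVandermonde}, grouped as $V=\big(F_n\cdots F_1 D\big)\big(G_1\cdots G_n\big)$. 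Since each $F_i$ is unit lower bidiagonal and each $G_i$ is unit upper bidiagonal, the first grouped factor is lower triangular with diagonal equal to that of $D$, while the second is upper triangular with unit diagonal.

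First I would note that $V$ is nonsingular under the hypothesis $t_i\neq t_j$, its determinant being $\prod_{i<j}(t_j-t_i)\neq 0$. A nonsingular matrix admits a unique factorization as (lower triangular)$\times$(unit upper triangular), and both $V=LU$ and $V=(F_n\cdots F_1 D)(G_1\cdots G_n)$ are of exactly this form, because $U$ and $G_1\cdots G_n$ both have unit diagonal. Matching the lower-triangular factors then forces
\[
L = F_n\cdots F_1 D,
\]
which is precisely \eqref{eq:BDLNewton}. The off-diagonal entries of the $F_i$ are read off directly from \eqref{eq:mijVan}, which coincides verbatim with the claimed \eqref{eq:mij}, and the diagonal of $D$ from \eqref{eq:diiVan}, matching \eqref{eq:dii}; as a consistency check, this diagonal also agrees with the already-noted diagonal $l_{i,i}=\prod_{k=1}^{i-1}(t_i-t_k)$ of $L$.

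The one point that needs care---and which I expect to be the main obstacle---is that the Vandermonde factorization \eqref{eq:BDAVandermonde} is recalled in the excerpt in the totally positive setting ($0<t_1<\cdots<t_{n+1}$), whereas the present statement assumes only distinctness. I would resolve this by treating \eqref{eq:BDAVandermonde} together with the explicit entries \eqref{eq:mijVan}--\eqref{eq:diiVan} as an algebraic identity: each scalar entry on both sides is a rational function of $t_1,\dots,t_{n+1}$ whose denominators are products of differences $t_a-t_b$ with $a\neq b$, hence nonvanishing under distinctness. Clearing these denominators turns every entry equation into a polynomial identity in the nodes; having been verified on the nonempty open set of positive increasing nodes, the difference of the two sides is a polynomial vanishing on an open set and is therefore identically zero. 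The factorization thus persists throughout $\{t_i\neq t_j\}$, and the uniqueness-of-LU argument applies verbatim. Should one prefer to bypass the Vandermonde result entirely, a fully self-contained alternative would be to verify $L=F_n\cdots F_1 D$ by computing the $(i,j)$ entry of the product of unit lower bidiagonal factors and checking that it equals $l_{i,j}/d_{j,j}=\prod_{k=1}^{j-1}(t_i-t_k)\big/\prod_{k=1}^{j-1}(t_j-t_k)$; this is elementary but more laborious, so I would keep it in reserve.
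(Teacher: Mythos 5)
Your proof is correct, and its overall strategy is the paper's: both arguments obtain \eqref{eq:BDLNewton} by comparing the Crout factorization $V=LU$ of \eqref{CroutV} with the bidiagonal factorization \eqref{eq:BDAVandermonde} of the Vandermonde matrix. The genuine difference lies in how the upper-triangular part is discarded. The paper invokes Theorem 3 of \cite{MPRNewton} to identify $U=G_1\cdots G_n$ explicitly (recorded as \eqref{eq:BDUU}), writes $V=L\,G_1\cdots G_n=F_n\cdots F_1D\,G_1\cdots G_n$, and cancels the invertible factors $G_i$; you instead note that $U$ and $G_1\cdots G_n$ are both unit upper triangular and appeal to the uniqueness of the (lower triangular)$\times$(unit upper triangular) factorization of the nonsingular $V$. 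Your route is more self-contained, needing only the triangular shape of $G_1\cdots G_n$ rather than its exact identification with $U$, at the cost of not producing the byproduct \eqref{eq:BDUU} that the paper reuses later. You also explicitly justify extending \eqref{eq:BDAVandermonde} from positive increasing nodes to arbitrary distinct nodes via a clear-denominators/density argument; the paper passes over this point in silence, so this is a welcome addition (the same conclusion also follows by observing that Neville elimination applied to $V$ has nonzero pivots whenever the nodes are distinct).
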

 \begin{proof}
From identities  \eqref{eq:BDAVandermonde},  \eqref{eq:BDV} and  \eqref{CroutV}, we have
 \begin{eqnarray*} 
  V =  M \left[ \begin{array}{c} m_0,\ldots,m_n \\t_1,\ldots,t_{n+1}\end{array} \right]=F_nF_{n-1}\cdots F_1 D G_1 G_2\cdots   G_n, 
 \end{eqnarray*}
 where  $F_i\in {\mathbb R}^{(n+1)\times(n+1)}$ and $G_i\in {\mathbb R}^{(n+1)\times(n+1)}$, $i=1,\ldots,n$,  are the  lower and upper triangular bidiagonal matrices described by 
\eqref{fgi} with 
$$
m_{i,j}=  \prod_{k=1}^{j-1}  \frac{t_{i } - t_{i-k }  }{t_{i-1} -t_{i-k-1}  },         \quad  \tilde m_{i,j}=t_j,\quad 1\le j< i \le n+1,
$$ 
and $D$ is the diagonal matrix whose entries are $d_{i,i}=   \prod_{k=1}^{i} t_i - t_k$ for $i=1,\ldots,n+1$. 

From Theorem  3 of \cite{MPRNewton},   the upper triangular matrix $U$ in \eqref{eq:U} satisfies  
   \begin{equation}\label{eq:BDUU}
U=  G_1 \cdots G_{n}, 
\end{equation}
where   $G_i\in{\mathbb R}^{(n+1)\times(n+1)}$, $i=1,\ldots,n$, are upper triangular bidiagonal matrices whose structure is described by \eqref{fgi} and their off-diagonal  entries   are  
$$      
   \widetilde{m}_{i,j} = t_{j},    \quad 1\le j< i \le n+1.  
$$
On the other hand, by \eqref{CroutV}, we also have,
 \begin{eqnarray*} 
V = M \left[ \begin{array}{c} w_0,\ldots,w_n \\t_1,\ldots,t_{n+1}\end{array} \right]U=M \left[ \begin{array}{c} w_0,\ldots,w_n \\t_1,\ldots,t_{n+1}\end{array} \right] G_1 G_2\cdots   G_n,
\end{eqnarray*}
and  conclude
 \begin{eqnarray*} 
  &&   L=M \left[ \begin{array}{c} w_0,\ldots,w_n \\t_1,\ldots,t_{n+1}\end{array} \right]  =F_nF_{n-1}\cdots F_1 D.  
\end{eqnarray*}
\end{proof}

Taking into account Theorem \ref{thm:BDAL},  the bidiagonal factorization of the matrix $L$ can be stored by the matrix $BD(L)$ with  
 \begin{equation}\label{eq:BDL}
 BD(L )_{i,j}:=\begin{cases}
 0  , & \textrm{if } i<j, \\[5pt] 
  \prod_{k=1}^{i-1} (t_i - t_k), & \textrm{if } i=j,  \\[5pt]  
     \prod_{k=1}^{j-1}  \frac{t_{i } - t_{i-k }  }{t_{i-1} -t_{i-k-1}  },   & \textrm{if } i>j. 
 \end{cases}
\end{equation}
 
The analysis of the sign of the entries in \eqref{eq:BDL} will allow us to characterize the total positivity property of the collocation matrix of Newton  bases in terms of the ordering of the  nodes. This fact is stated in the following result.

\begin{corollary}  \label{cor:L}
Given interpolation nodes $t_1,\ldots, t_{n+1}$, with $t_i\ne t_j$ for $i\ne j$, let $L\in{\mathbb R}^{(n+1)\times(n+1)}$  be the collocation matrix  \eqref{eq:ColNewton} of the Newton basis  \eqref{eq:NewtonBasis}  and $J$ the diagonal matrix $J:=\textrm{diag}((-1)^{i-1}  )_{1\le i\le n+1}$.   
\begin{enumerate}
\item[a)]  The matrix $L$ is totally positive if and only if  $t_1<\cdots<t_{n+1}$. Moreover, in this case, $L$ and the matrix $BD(L)$ in \eqref{eq:BDL} can be computed to HRA.
\item[b)]  The matrix  
$L_{J} := L J$ is   totally positive   if and only if $t_1>\cdots>t_{n+1}$.   Moreover, in this case, the matrix $BD(L_J)$  can be computed to HRA. 
\end{enumerate}
Furthermore,   the   singular values  and  the inverse matrix  of $L$, as well as  the  solution  of linear systems $ L d = f$, where the entries of $f = (f_1, \ldots , f_{n+1})^T$ have alternating signs, can be performed to HRA.
\end{corollary}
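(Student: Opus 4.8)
The plan is to deduce everything from the sign pattern of the bidiagonal factorization, which Theorem~\ref{thm:BDAL} already supplies for arbitrary distinct nodes. Recall that a nonsingular matrix is totally positive if and only if it admits a factorization of the form \eqref{BDAfac} in which all off-diagonal multipliers are nonnegative and all diagonal entries are positive, and that this factorization is unique \cite{Gasca3}. Since $L$ is lower triangular with nonzero diagonal it is nonsingular, and its factorization $L=F_n\cdots F_1D$ carries no $\widetilde m$ multipliers; hence the criterion collapses to the signs of the $m_{i,j}$ in \eqref{eq:mij} and the $d_{i,i}$ in \eqref{eq:dii}, and part~a) becomes the purely arithmetic question of when these have the correct sign.

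For the \emph{if} direction of a), I would assume $t_1<\cdots<t_{n+1}$, so that every difference $t_i-t_k$ with $i>k$ is positive; then each $d_{i,i}=\prod_{k=1}^{i-1}(t_i-t_k)>0$, and in each $m_{i,j}=\prod_{k=1}^{j-1}(t_i-t_{i-k})/(t_{i-1}-t_{i-k-1})$ both the numerator $t_i-t_{i-k}$ and the denominator $t_{i-1}-t_{i-k-1}$ are positive, so $m_{i,j}>0$, and the criterion gives total positivity. For the converse I would use only the lowest-order data: $d_{2,2}=t_2-t_1>0$ forces $t_2>t_1$, and then $m_{i,2}=(t_i-t_{i-1})/(t_{i-1}-t_{i-2})\ge 0$ forces $t_i>t_{i-1}$ for $i=3,\ldots,n+1$ by induction, since the denominator is already known positive and the nodes are distinct. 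The high relative accuracy statements then follow from the NIC condition, because every entry of $BD(L)$ and of $L$ is built from products and quotients of the differences $t_i-t_k$, i.e.\ subtractions of initial data that incur no inaccurate cancellation.

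For part~b) I would reduce to a) by the substitution $s_i:=-t_i$. A direct computation gives $(L_J)_{i,j}=(-1)^{j-1}\prod_{k=1}^{j-1}(t_i-t_k)=\prod_{k=1}^{j-1}(s_i-s_k)$, so $L_J$ is precisely the collocation matrix \eqref{eq:ColNewton} of the Newton basis at the nodes $s_1,\ldots,s_{n+1}$. Since $t_1>\cdots>t_{n+1}$ is equivalent to $s_1<\cdots<s_{n+1}$, applying part~a) to the $s_i$ yields at once the total positivity equivalence and the computability of $BD(L_J)$ to high relative accuracy.

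Finally, for the closing clause I would feed the stored factorization $BD(\cdot)$ of the relevant totally positive matrix to the algorithms of \cite{Koev2}: directly $BD(L)$ when the nodes increase, and $BD(L_J)$ when they decrease, using that $J$ is orthogonal and involutory. Because singular values are invariant under the orthogonal right factor, $L$ and $L_J$ share them; the identities $L^{-1}=J\,L_J^{-1}$ and $L_J(Jd)=f$ transfer the inverse and the solution of $Ld=f$ from $L_J$ to $L$ by mere sign changes, preserving high relative accuracy, while the alternating-sign hypothesis on $f$ is exactly what \cite{Koev2} requires. I expect the one genuinely delicate point to be the converse in a): one must ensure that the multipliers produced by Theorem~\ref{thm:BDAL} are the uniquely determined canonical ones to which the Gasca--Pe\~na sign criterion applies, rather than merely those of some factorization.
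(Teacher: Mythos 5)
Your argument for part a) and for the closing clause follows essentially the same route as the paper: read off the signs of the multipliers $m_{i,j}$ and pivots $d_{i,i}$ from the factorization of Theorem~\ref{thm:BDAL}, recover the node ordering in the converse from $d_{2,2}=t_2-t_1$ and the recursion $t_i-t_{i-1}=m_{i,2}(t_{i-1}-t_{i-2})$, invoke the NIC condition for HRA, and hand $BD(L)$ or $BD(L_J)$ to the algorithms of \cite{Koev2}, transferring results back through $L^{-1}=JL_J^{-1}$ and $d=Jc$. Where you genuinely diverge is part b): the paper recomputes the bidiagonal factorization of $L_J$ explicitly, writing $L_J=F_n\cdots F_1\widetilde D$ with $\widetilde D=DJ$ and then repeating the sign analysis and the induction for decreasing nodes, whereas you observe that $(LJ)_{i,j}=\prod_{k=1}^{j-1}(s_i-s_k)$ with $s_i:=-t_i$, so that $L_J$ \emph{is} the Newton collocation matrix at the reflected nodes and part a) applies verbatim. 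Your reduction is shorter and makes the equivalence $t_1>\cdots>t_{n+1}\Leftrightarrow s_1<\cdots<s_{n+1}$ do all the work; the paper's direct computation has the advantage of producing the explicit formula \eqref{eq:BDLJ} for $BD(L_J)$, which is used later as the input to \verb"TNSolve" and \verb"TNSingularValues" in the numerical experiments. Your version recovers the same entries (the signs cancel pairwise in the quotients $m_{i,j}$ and accumulate to $(-1)^{i-1}$ on the diagonal), but that small verification should be written out if the explicit $BD(L_J)$ is needed. Your closing caveat about the converse in a) is well taken and applies equally to the paper's proof: both arguments rely on the factorization of Theorem~\ref{thm:BDAL} being the canonical (Neville-elimination) one of \cite{Gasca3}, for which total positivity forces nonnegative multipliers; combined with the distinctness of the nodes this gives the strict positivity used in the induction.
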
 

\begin{proof}  Let  
$L=  F_n \cdots F_{1}D$ be the bidiagonal factorization provided by Theorem \ref{thm:BDAL}.

a) If $t_1<\cdots<t_{n+1}$,  the entries 
$m_{i,j}$ in \eqref{eq:mij}  and $d_{i,i}$ in \eqref{eq:dii}   are all positive and we  conclude that the diagonal matrix $D$  and the bidiagonal matrix factors $F_i$, $i=1,\ldots,n$,   are   totally positive.   Taking into account that the product of totally positive matrices is a totally positive matrix (see Theorem 3.1 of \cite{Ando}), we can guarantee that $L$ is   totally positive. Conversely, if $L$ is totally positive then the entries  $m_{i,j}$ in \eqref{eq:mij}  and $d_{i,i}$ in \eqref{eq:dii}  take all positive values. Moreover, since
  $$
  t_2-t_1= d_{2,2}>0, \quad   t_i-t_{i-1}=m_{i,2}(t_{i-1}-t_{i-2}),  \quad i=3,\ldots,n+1,
 $$
 we derive  by induction  that $t_i-t_{i-1}>0$ for $i=2,\ldots,n+1$. 
 
On the other hand, for increasing sequences of nodes, the subtractions in the
 computation of the entries $m_{i,j}$  and $p_{i,i}$     involve only initial data and so, will not lead to subtractive cancellations. So, the computation to high relative accuracy of the above mentioned algebraic problems can be performed to high relative accuracy using the matrix representation \eqref{eq:BDL} and the Matlab commands in Koev's web page  (see Section 3 of   \cite{Koev1}). 

b) Now, using (\ref{eq:BDLNewton}) and defining  $\widetilde D:=DJ$, we can write
\begin{equation}\label{BDwj}
L_J=  F_n\cdots F_1\widetilde D,
\end{equation}
where   $F_i\in{\mathbb R}^{(n+1)\times(n+1)}$, $i=1,\ldots,n$, are the lower triangular bidiagonal matrices  described in \eqref{fgi}, whose off-diagonal  entries   are  given in     \eqref{eq:mij},  and $\widetilde D\in{\mathbb R}^{(n+1)\times(n+1)}$ is the diagonal matrix $\widetilde D=\textrm{diag}(\widetilde d_{1,1},\ldots, \widetilde d_{n+1})$ with
\begin{equation}       
 \widetilde{d}_{i,i} = (-1)^{i-1}\prod_{k=1}^{i-1} (t_i - t_k),       \quad 1\le   i \le n+1.   \label{eq:diitilde}  
\end{equation}
It is worth noting that,  according to \eqref{BDwj}, the bidiagonal decomposition of   $L_J$   is given by
 \begin{equation}\label{eq:BDLJ}
 BD(L_J )_{i,j}:=\begin{cases}
 0  , & \textrm{if } i<j, \\[5pt] 
  (-1)^{i-1}\prod_{k=1}^{i-1} (t_i - t_k), & \textrm{if } i=j,  \\[5pt]  
     \prod_{k=1}^{j-1}  \frac{t_{i } - t_{i-k }  }{t_{i-1} -t_{i-k-1}  },   & \textrm{if } i>j. 
 \end{cases}
\end{equation}
If  $t_1<\cdots<t_{n+1}$ then
$m_{i,j}>0 $, $ \widetilde{d}_{i,i}>0$ and, using the above reasoning, we conclude that  $L_J$ is totally positive.
Conversely,  if $L_J$ is totally positive and so, $m_{i,j}>0 $, $ \widetilde{d}_{i,i}>0$, taking into account that
  $$
  t_2-t_1=-\widetilde{d}_{2,2}<0, \quad   t_i-t_{i-1}=m_{i,2}(t_{i-1}-t_{i-2}),  \quad i=3,\ldots,n+1,
 $$
 we derive by induction that $t_i-t_{i-1}<0$ for $i=2,\ldots,n+1$. 

For decreasing nodes,  the computation to high relative accuracy of $L_j$, 
  its  singular values,  the inverse matrix $L_{J}^{-1}$ and the resolution of  $L_{J}c= f$, where $f = (f_1, \ldots , f_{n+1})^T$ has alternating signs  can be deduced in a similar way to the increasing case.  Finally, since  $J$ is a unitary matrix, the singular values of  $L_{J} $ coincide with those of  $L$. 
Similarly, taking into account that  
\begin{equation*}
L ^{-1}= JL_{ J}^{-1},
\end{equation*}
we can compute $L^{-1}$ accurately. Finally,  if we have a linear system of equations  $L d= f$, where the elements of  $f = (f_1, \ldots , f_{n+1})^T$  have alternating signs,  we can solve to high relative accuracy the  system  $L_{J}c = f$ and then obtain $d=Jc$.
\end{proof}

Let us observe that the  factorization   \eqref{eq:BDLNewton} of the collocation matrix of the Newton basis corresponding to the nodes $t_1,\ldots,t_{n+1}$ can be used to solve Lagrange  interpolation problems. The Newton form of the Lagrange interpolant can be written as follows

\begin{equation}\label{eq:Laginterpolant}
p_n(t)=\sum_{i=0}^n d_{i+1} w_i(t), 
\end{equation} 
with $d_{i}:=[t_1,\ldots,t_{i}]f$, $i=1,\ldots,n+1$. The computation of the divided differences, which are usually obtained through the recursion  in \eqref{eq:divideddif}, can be alternatively obtained by solving the linear system
$$
L d=f, 
$$ 
with $d:=(d_1,\ldots,d_{n+1})^T$ and $f:=(f_1,\ldots,f_{n+1})^T$, using  the Matlab  function  \verb"TNSolve" in Koev's web page \cite{Koev3} and taking
the matrix form $\eqref{eq:BDA}$ of the bidiagonal decomposition of $L$ as input argument.
Taking into account Corollary \ref{cor:L}, $BD(L)$ (for increasing nodes) and  $BD(L_J)$ (for decreasing nodes)    can be computed to high relative accuracy. Then, if the elements of the vector $f$ are given to high relative accuracy, the computation of the vector $d$ can also be achieved to high relative accuracy.

Finally, the numerical experimentation illustrated in Section \ref{sec:experimentos} will compare the vectors of divided differences obtained using the provided bidiagonal factorization, the recurrence  \eqref{eq:divideddif} and, finally, the Matlab command $\setminus$ for the resolution of linear systems.

 \section{Error analysis and perturbation theory}\label{sec:error}

Let us consider   the collocation matrix    $L\in{\mathbb R}^{(n+1)\times(n+1)}$   of the Newton basis  \eqref{eq:NewtonBasis} at the   nodes  $t_1<\cdots < t_{n+1}$ (see \eqref{eq:ColNewton}).
Now, we  present a procedure for  the efficient  computation of
$ BD(L)$, that is, the matrix representation of the bidiagonal factorization \eqref{BDAfac} of $L$.  

Taking into account  \eqref{eq:BDL}, Algorithms 1 and 2 compute      $m_{i,j}:=BD(L)_{i,j}$, $j<i$, and   $p_{i}:=BD(L)_{i,i}$, respectively. 

\bigskip

  \begin{tabular}{ll}
 
 &    {\bf Algorithm 1: Computation of  }  $m_{i,j}$    \hfill \\[3pt]  
&   {\bf for i :=2  to   n+1} \\[3pt]
& $m_{i,1}:=  1$ \\[3pt]
& \hskip 1cm   \textrm{\bf for j :=2   to i-1}\\[3pt]
& \hskip 1cm    $M:=  \frac{  t_i-t_{i-j+1}  }{ t_{i-1}-t_{i-j} }$ \\[3pt]
& \hskip 1cm   $m_{i,j}:=    m_{i,j-1}  \cdot   M $ \\[3pt]
 & \hskip 1cm  {\bf end j}\\[3pt]
&    {\bf end i }  \\[3pt]
\end{tabular}

\bigskip
 
\begin{tabular}{ll}
 
 &   \textrm{\bf Algorithm 2: Computation of }  $p_{i,i}$   \hfill  \\[3pt]  
& $p_{1}:= 1$\\[3pt] 
&   {\bf for i :=2  to   n+1} \\[3pt] 
& $p_{i}:=1 $\\[3pt] 
& \hskip 1cm {\bf for k :=1   to i-1}\\[3pt] 
& \hskip 1cm  $ p_{i}:= p_{i} \cdot (t_i-t_{i-k}) $ \\[3pt] 
 & \hskip 1cm  {\bf end k} \\[3pt] 
&    {\bf end i }  \\[3pt] 
\end{tabular}

In the sequel, we analyze the stability of Algorithms 1 and 2 under the influence of imprecise computer arithmetic or perturbed input data.
For this purpose, let us first introduce some standard notations in error analysis.

For a given floating-point
arithmetic and a real value $a\in\mathbb R$,  the computed element   is usually  denoted by either $\hbox{fl}(a)$ or by $\hat a$. In order to study the effect of  rounding errors,   we shall use  the well-known models
\begin{equation}\label{(2.1)}
\hbox{fl}(a \, \hbox{op} \,  b)=(a \, \hbox{op} \, b)(1 + \delta)^{\pm 1},
\quad |\delta| \le u,
\end{equation}
where $u$  denotes the unit roundoff and $\hbox{op}$  any of the elementary
operations $+$, $-$, $\times$, $/$ (see \cite{Higham}, p. 40 for more
details). 

 Following \cite{Higham}, when performing an error analysis, one usually deals with quantities  $\theta _k$  such
that  
\begin{equation}\label{eq:gamma}
|\theta _k|\le\gamma _k, \quad  \gamma _k:={ku\over 1-ku},
\end{equation}
for a given $k\in {\mathbb N}$ with $ku<1$.   
Taking into account, Lemmas 3.3 and 3.4 of \cite{Higham}, the
following properties of the values \eqref{eq:gamma} hold:
\begin{itemize}
 \item[a)] $(1+\theta _k)(1+\theta _j)= 1+\theta _{k+j}$,
 \item[b)] $\gamma _k+\gamma _j+\gamma _k\gamma _j\le \gamma
_{k+j}$,
 \item[c)] $\gamma _k+u\le \gamma _{k+1}$,
 \item[d)] if
 $\rho _i=\pm 1$, $\vert \delta _i\vert \le u$, $i=1,\ldots,k$,
then
 $$
 \prod _{i=1}^k(1+\delta _i)^{\rho _i}=1+\theta _k.
 $$
\end{itemize}
For example, statement a) above means that for any given two values $\theta_k$
and $\theta_j$, bounded by $\gamma_k$ and $\gamma_j$, respectively,
 there exists a number $\theta_{k+j}$,   bounded by $\gamma_{k+j}$,
such that the above identity holds.  Further use of the previous
symbols must be intended in this respect.

The following result analyzes the numerical error due to imprecise computer arithmetic in Algorithms 1 and 2, showing that both compute the bidiagonal factorization \eqref{BDAfac} of $L$ accurately in a floating point arithmetic. 

\begin{theorem}  \label{thm:errorBDH}  For $n>1$, let $L\in{\mathbb R}^{(n+1)\times(n+1)}$  be the collocation matrix  \eqref{eq:ColNewton} of the Newton basis  \eqref{eq:NewtonBasis} at the   nodes  $t_1<\cdots < t_{n+1}$.
Let $BD(L)=(b_{i,j})_{1\le i,j\le n+1}$ be the matrix form of the bidiagonal decomposition \eqref{BDAfac} of  $L$ and  
$\hbox{fl}(BD(L ))=(\hbox{fl}( b_{i,j}) )_{1\le i,j\le n+1}$ be the matrix computed with Algorithms 1 and 2 in floating point arithmetic with machine precision $u$. 
Then 
\begin{equation}
\left| \frac{ b_{i,j}- \hbox{fl}( b_{i,j} )}{ b_{i,j}}  \right|    \le  \gamma_{ 4n-5},\quad 1\le i,j\le n+1. 
\end{equation}
\end{theorem}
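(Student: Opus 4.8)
The plan is to apply the standard rounding model \eqref{(2.1)} to each elementary operation executed by Algorithms 1 and 2, express every computed entry as its exact value multiplied by a product of factors $(1+\delta)^{\pm 1}$ with $|\delta|\le u$, and then collapse that product into a single factor $1+\theta_N$ by property (d) of the $\gamma_k$ quantities. The relative error of each entry is then bounded by $\gamma_N$, where $N$ is the number of roundings incurred in its computation, so the whole task reduces to counting roundings entry by entry and taking the maximum over $1\le i,j\le n+1$.

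First I would analyse Algorithm 2. The diagonal entry $p_i=\prod_{k=1}^{i-1}(t_i-t_{i-k})$ is formed from $i-1$ subtractions of initial data, each contributing a factor $(1+\delta)$ with $|\delta|\le u$, together with $i-1$ cumulative multiplications. Since the initial multiplication is by the exactly representable value $1$ and hence incurs no rounding, only $i-2$ multiplications are nontrivial. By property (d), $\hbox{fl}(p_i)=p_i\,(1+\theta_{2i-3})$, so the relative error of any diagonal entry is at most $\gamma_{2i-3}\le\gamma_{2n-1}$.

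Next I would analyse Algorithm 1. To obtain $m_{i,j}$ the algorithm builds the $j-1$ factors $M_l=(t_i-t_{i-l+1})/(t_{i-1}-t_{i-l})$, $l=2,\dots,j$, and multiplies them cumulatively. Each $M_l$ costs two subtractions of initial data and one division; the denominator subtraction enters with exponent $-1$, but property (d) accommodates the $\pm1$ exponents, so each $M_l$ carries a factor $1+\theta_3$ and contributes $3$ roundings. The cumulative product uses $j-1$ multiplications, of which the first (again by $1$) is exact, leaving $j-2$ nontrivial ones. Hence the total is $3(j-1)+(j-2)=4j-5$ roundings, giving $\hbox{fl}(m_{i,j})=m_{i,j}\,(1+\theta_{4j-5})$ and relative error at most $\gamma_{4j-5}$. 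Because $j$ ranges only up to $i-1\le n$, the largest such bound is $\gamma_{4n-5}$, attained at the entry $m_{n+1,n}$.

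Finally I would assemble the uniform bound. The unit entries $m_{i,1}=1$ and the structural zeros ($i<j$) are exact and contribute nothing. Comparing the two regimes, $4n-5\ge 2n-1$ precisely when $n\ge 2$, which is the standing hypothesis $n>1$; thus the off-diagonal count dominates and every entry satisfies the relative error bound $\gamma_{4n-5}$, as claimed. I expect the main obstacle to be the bookkeeping of the operation count rather than any deep estimate: one must correctly credit the subtractions of initial data (which, being subtractions of exact inputs, incur only a single rounding and no amplification by cancellation), track the $-1$ exponent produced by each division, and---crucially for the sharp constant---recognise that the leading multiplication by $1$ is exact, so that the count lands on $4n-5$ rather than $4n-4$.
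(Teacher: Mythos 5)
Your proposal is correct and follows the same route as the paper: entry-by-entry rounding counts via the standard model \eqref{(2.1)} and property (d) of the $\theta_k$, yielding exactly the paper's intermediate bounds $\gamma_{4j-5}$ for the off-diagonal entries and $\gamma_{2i-3}$ for the diagonal ones, followed by the comparison $2n-1\le 4n-5$ for $n>1$. The only difference is that you carry out the operation count explicitly (including the exact multiplications by $1$ needed to land on $4j-5$ rather than $4j-4$), whereas the paper compresses this into ``accumulating relative errors as proposed in Higham.''
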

 
\begin{proof}
For $i> j$,   $b_{i,j}$ can be computed using Algorithm 1. Accumulating  relative errors as proposed in \cite{Higham},  we can easily derive 
 \begin{equation}\label{eq:flmultipliersHq} 
 \left|  \frac{ b_{i,j}- \hbox{fl}( b_{i,j} )}{ b_{i,j}}  \right|  \le   \gamma_{ 4j-5 } \le     \gamma_{ 4 i-9 } \le    \gamma_{ 4n-5 },   \quad  1\le j<i\le n+1.
\end{equation}
Analogously, for $i= j$, $b_{i,i}$ can be computed using Algorithm 2 and    we have
 \begin{equation}\label{eq:flmultipliersHqT} 
\left|  \frac{ b_{i,i}- \hbox{fl}( b_{i,i} )}{ b_{i,i}}  \right| \le   \gamma_{2i-3} \le   \gamma_{2n-1},   \quad  1\le i \le n+1.
\end{equation}
Finally, since $2n-1 \le 4n-5$ for $n>1$, the result follows.
 \end{proof}
 
 Now, we analyze the effect on the bidiagonal factorization of the collocation matrices of Newton  bases due to small relative perturbations in the interpolation nodes $t_i$, $i=1,\ldots,n+1$.  Let us suppose that the perturbed nodes are   
 $$
 t_i'=t_i(1+\delta_i), \quad  i=1,\ldots,n+1.
 $$ 
We 
  define the following values that  will allow us to obtain  an appropriate structured condition number, in a similar way to other analyses performed in   \cite{Demmel2,Koev4,Marco4,Marco2016,Marco5,Marco2022}:
\begin{equation}
rel\_gap_t:={\displaystyle\min_{i\ne j}\frac{ | t_i-t_j | }{|t_i | + | t_j | }}, \quad \theta:=\displaystyle\max_i \frac{ |t_i-t_i' | }{| t_i | }=\displaystyle\max_i \delta_i, \quad \kappa :=\frac{1}{rel\_gap_t},
\end{equation}
 where $rel\_gap_t>>\theta$.

 \begin{theorem} \label{thm:errorBDHpert}
Let $L$ and $L'$  be the collocation matrices  \eqref{eq:ColNewton} of the Newton basis  \eqref{eq:NewtonBasis} at the   nodes  $t_1<\cdots < t_{n+1}$ and $t_1'<\cdots < t_{n+1}'$, respectively, with $t_i'=t_i(1+\delta_i)$, for $i=1,\ldots,n+1$, and $|\delta_i|\le \theta_i$. Let $BD(L)=(b_{i,j})_{1\le i,j\le n+1}$ and  $BD(L')=(b_{i,j}')_{1\le i,j\le n+1}$ be the matrix form of the bidiagonal factorization of $L$ and $L'$, respectively. Then 
\begin{equation}\label{eq:ebperturb}
\left| \frac{ b_{i,j}-  b_{i,j} '}{ b_{i,j}}  \right|    \le  {(2n-2)\kappa \theta\over 1-(2n-2)\kappa  \theta},\quad 1\le i,j\le n+1.. 
\end{equation}
\end{theorem}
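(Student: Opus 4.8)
The plan is to exploit the explicit product formulas for the entries of $BD(L)$ recorded in \eqref{eq:BDL} and to track how each elementary factor $t_a-t_b$ responds to the relative perturbations of the nodes. The entries with $i<j$ vanish identically in both $BD(L)$ and $BD(L')$, so they contribute no error, and it remains to control the diagonal entries $b_{i,i}=\prod_{k=1}^{i-1}(t_i-t_k)$ and the subdiagonal multipliers $b_{i,j}=\prod_{k=1}^{j-1}\frac{t_i-t_{i-k}}{t_{i-1}-t_{i-k-1}}$ for $i>j$.

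The crucial first step is a per-factor estimate. Writing $t_a'-t_b'=t_a(1+\delta_a)-t_b(1+\delta_b)=(t_a-t_b)+(t_a\delta_a-t_b\delta_b)$, I would bound the relative perturbation of a single difference by
\[
\left|\frac{(t_a'-t_b')-(t_a-t_b)}{t_a-t_b}\right|=\left|\frac{t_a\delta_a-t_b\delta_b}{t_a-t_b}\right|\le\frac{(|t_a|+|t_b|)\,\theta}{|t_a-t_b|}\le\kappa\theta,
\]
where the last inequality is exactly the definition of $rel\_gap_t$ together with $\kappa=1/rel\_gap_t$. Hence every elementary factor satisfies $t_a'-t_b'=(t_a-t_b)(1+\varepsilon_{a,b})$ with $|\varepsilon_{a,b}|\le\kappa\theta$. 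This is the step in which the structured condition number enters, and I expect it to be the conceptual heart of the argument; the remainder is essentially an accounting of how many such factors appear in each entry.

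Next I would propagate these per-factor bounds through the products, letting $\kappa\theta$ play the role of $u$ in the $\theta_k$/$\gamma_k$ calculus of \eqref{eq:gamma} and properties (a)--(d). The diagonal entry $b_{i,i}$ is a product of $i-1$ such factors, giving relative error at most $\frac{(i-1)\kappa\theta}{1-(i-1)\kappa\theta}$ with $i-1\le n$. Each subdiagonal multiplier $b_{i,j}$ (for $i>j$) is a quotient of $j-1$ numerator factors and $j-1$ denominator factors; by property (d) the product $\prod_k(1+\varepsilon_k^{\mathrm{num}})(1+\varepsilon_k^{\mathrm{den}})^{-1}$ equals $1+\eta$ with $|\eta|$ bounded by the $\gamma$-quantity of order $2(j-1)$ evaluated at $\kappa\theta$, that is $\frac{2(j-1)\kappa\theta}{1-2(j-1)\kappa\theta}$, where $2(j-1)\le 2n-2$ since $j<i\le n+1$.

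Finally, since the map $m\mapsto \frac{m\kappa\theta}{1-m\kappa\theta}$ is increasing (under the standing assumption $rel\_gap_t\gg\theta$, which keeps the denominators positive), the largest number of factors governs the uniform bound. Because $n\le 2n-2$ for $n\ge 2$, the subdiagonal count $2n-2$ dominates the diagonal count $n$, so all entries are bounded by $\frac{(2n-2)\kappa\theta}{1-(2n-2)\kappa\theta}$, which is precisely \eqref{eq:ebperturb}. The only points requiring care are the bookkeeping of the factor counts and the verification that the single-difference estimate yields exactly the constant $\kappa$; everything else follows from the elementary error calculus already established in the paper.
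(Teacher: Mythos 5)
Your proposal is correct and follows essentially the same route as the paper: the same per-factor estimate $t_a'-t_b'=(t_a-t_b)(1+\varepsilon_{a,b})$ with $|\varepsilon_{a,b}|\le\kappa\theta$, followed by Higham-style accumulation over the $2(j-1)\le 2n-2$ factors of each multiplier and the at most $n$ factors of each diagonal entry. Your explicit bookkeeping (including the observation that $n\le 2n-2$ for $n\ge 2$ so the subdiagonal count dominates) simply fills in details the paper compresses into ``accumulating the perturbations in the style of Higham.''
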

 \begin{proof}
First, let us observe that 
$$
t_i'-t_j'= (t_i-t_j) (1+\delta_{i,j}), \quad  |\delta_{i,j}|\le \frac{\theta}{rel\_gap_t}.
$$
Accumulating the perturbations in the style of Higham (see Chapter 3 of \cite{Higham}), we derive
\begin{equation}\label{eq:mijperturbed}
m_{i,j}'=m_{i,j} (1+\bar{\delta}), \quad |\delta|\le  {2(n-1)\kappa \theta\over 1-(2n-2)\kappa  \theta}.
\end{equation}
Analogously, for the diagonal entries $p_{i}$ we have 
\begin{equation}\label{eq:piperturbed}
p_{i}'=p_{i} (1+\bar{\delta}), \quad |\bar{\delta} |\le  { (n-1)\kappa \theta\over 1-(n-1)\kappa  \theta}.
\end{equation}
Finally, using \eqref{eq:mijperturbed} and  \eqref{eq:piperturbed}, the result follows.
\end{proof}

 Formula \eqref{eq:ebperturb} can also be  obtained using  Theorem 7.3 of  \cite{Marco5} where it is shown that small relative perturbations in the nodes of a Cauchy-Vandermonde matrix  produce only small relative perturbations in its bidiagonal factorization. Let us note that the entries $m_{i,j}$ and $p_{i,i}$ of the bidiagonal decomposition of collocation matrices of Newton bases at distinct nodes coincide with those of Cauchy-Vandermonde matrices with $l=0$. Finally, let us note that  quantity $(2n-2)\kappa \theta$ can be seen as an appropriated structured condition number for the mentioned collocation matrices  of the Newton bases  \eqref{eq:NewtonBasis}. 

\section{Numerical experiments}\label{sec:experimentos}

In order to give a numerical support to the theoretical methods discussed in the previous sections,  we provide a series of numerical experiments. In all cases, we have considered  ill-conditioned nonsingular   collocation matrices $L$ of $(n+1)$-dimensional Newton bases \eqref{eq:NewtonBasis} with equidistant increasing or decreasing nodes for $n=15, 25, 50,100$ in a unit-length interval---with the exception of the experiments concerning the Runge function, where the chosen interval is $[-2,2]$.

Let us recall that once the bidiagonal decomposition of a TP matrix $A$ is computed to high relative accuracy, its matrix representation $BD(A)$  can be used as an input argument of the functions of the \verb"TNTool" package, made available by Koev in \cite{Koev3}, to perform to high relative accuracy the solution of different algebraic problems. In particular, \verb"TNSolve" is used to resolve linear systems $Ax=b$ for vectors $b$ with alternating signs, \verb"TNSingularValues" for the computation of the singular values of $A$ and \verb"TNInverseExpand" (see \cite{Marco3}) to obtain the inverse $A^{-1}$. The computational cost  of  \verb"TNSingularValues" is $O(n^3)$, being $O(n^2)$ for the other methods.

In order to check the accuracy of our bidiagonal decomposition approach, $BD(L)$  and $BD(L_{J})$  (see \eqref{eq:BDL} and \eqref{eq:BDLJ})   have been considered with the mentioned routines to solve each problem. The obtained approximations have been compared with those computed by other commonly used procedures, such as standard Matlab routines or the divided difference recurrence \eqref{eq:divideddif}, in the case of the interpolant coefficients.

In this context, the values provided by Wolfram Mathematica $13.1$ with $100$-digit arithmetic have been taken as the exact solution of the considered algebraic problems. Then, the relative errors of the results are given by $e:=|(y-\tilde{y})/y|$, where $\tilde{y}$ is the approximation of the given method and $y$ the exact value computed in Mathematica.

{\bf Computation of coefficients of the Newton form of the Lagrange interpolant to HRA.}
In this numerical experiment, the coefficients $d_{i}:=[t_1,\ldots,t_{i}]f$, $i=1,\ldots,n+1$, of the Newton form of the Lagrange interpolant \eqref{eq:Laginterpolant} were computed using three different methods. First, the divided difference recurrence \eqref{eq:divideddif}  determined $d_i$ directly. Later, the standard $\setminus$ Matlab command and  the bidiagonal decomposition of Section \ref{sec:BDANewton} were also used. Let us note that both of them obtain $d_i$ as the solution of the linear system $Ld=f$, where $L$ is the collocation matrix of the considered Newton basis with $d:=(d_1,\ldots,d_{n+1})^T$, $f:=(f_1,\ldots,f_{n+1})^T$. 
Let us recall that  the last method requires the bidiagonal decomposition  $BD(L)$ (for increasing nodes) and  $BD(L_J)$ (for decreasing nodes)  as an input argument for the Octave/Matlab function \verb"TNSolve", available in \cite{Koev3}. Note that for nodes in decreasing order, the system  $L_Jc=f$,  with $L_J:=L J$ and $J:=\textrm{diag}((-1)^{i-1}  )_{1\le i\le n+1}$,
is solved and  the obtained solution $c$ allows  to recover $d$ as $d=Jc$ (see the proof of Corollary \ref{cor:L}).

In all cases,  $f_i$,  $i=1,\dots,n+1$,  were chosen to be random integers with uniform distribution in $[0,10^3]$, adding alternating signs to guarantee high relative accuracy when  obtaining the coefficients  with the divided difference recurrence \eqref{eq:divideddif} and with \verb"TNSolve". 

Relative errors are shown in Table \ref{TableLinearSystems}. The results clearly illustrate the high relative accuracy achieved  with the divided difference recurrence \eqref{eq:divideddif} and the function \verb"TNSolve" applied to the bidiagonal decompositions proposed in this work, supporting the theoretical results of the previous sections.

Let us note that the extension to the bivariate interpolation with the Lagrange-type data
 using Newton bases and  leading to the use of a generalized Kronecker product of their collocation matrices will be addressed  by the authors by applying
 the results in this paper.

\begin{table}[ht]\centering
\caption{\label{TableLinearSystems} Relative errors of the approximations to the solution of the linear system $Ld=f$, for equidistant nodes in a unit-length interval in increasing and decreasing order.}
\begin{tabular}{@{}ccccccc@{}}
\toprule
\multicolumn{1}{l}{} &  \multicolumn{3}{c}{\bf $t_1<\cdots<t_{n+1}$ } & \multicolumn{3}{c}{\bf $t_1>\cdots>t_{n+1}$ }  \\
\midrule
$n+1$  & {Div. diff.} &  $ L \setminus f$ & {\verb"TNSolve"$(BD(L),f)$}  &  {Div. diff.} &   $L \setminus f$ & {$J\cdot$\verb"TNSolve"$(BD(L),f)$}    \\
\midrule
 15 & $1.6e-16$&   $1.3e-13$ & $2.4e-17$ &  $8.7e-17$ & $3.0e-17$ & $3.0e-17$\\
25  & $8.7e-16$&   $6.2e-11$ & $8.0e-16$& $6.2e-16$ & $2.2e-10$ & $1.9e-16$ \\
50  &$1.1e-15$  & $1.0e-3$ & $2.1e-15$ & $2.9e-15$ & $2.8e-4$ & $3.8e-15$ \\ 
100 & $4.7e-15$  &$2.6e+12$& $5.8e-15$ &  $5.2e-15$ & $5.8e+12$ & $6.1e-15$\\
\botrule
\end{tabular}
\end{table}

{\bf Computation of the coefficients of the Newton form for a function of constant sign.}
As explained in the previous case, when determining the interpolating polynomial coefficients of the Newton form, to guarantee high relative accuracy the vector $(f_1,\dots,f_{n+1})$ is required to have alternating signs. However, other sign structures can be addressed by the methods discussed in this work, although HRA cannot be assured in these cases. To illustrate the behavior of both the divided differences recurrence and the bidiagonal decomposition approach in such scenario, numerical experiments for the classical Runge function $1/(1+25x^2)$ with equidistant nodes in the $[-2,2]$ interval were performed. Relative errors of the computed coefficients of the Newton form are gathered for several $n$ in Table \ref{TableRunge}, comparing the performance of the divided difference recurrence \eqref{eq:divideddif}, the function \verb"TNSolve" applied to the bidiagonal factorization proposed in Section \ref{sec:BDANewton} and the standard $\setminus$ Matlab command.
As can be seen, good accuracy is achieved for small enough values of $n$, while relative errors for increasing $n$ behave considerably worse with the standard $\setminus$ Matlab command than with both the divided differences recurrence and the bidiagonal decomposition approaches, which achieve similar precision.

\begin{table}[ht]\centering
\caption{\label{TableRunge} Relative errors of the approximations to the solution of the linear system $Ld=f$, for the classical Runge function $(1+25x^2)^{-1}$, with equidistant nodes in increasing order in $[-2,2]$.}
\begin{tabular}{@{}cccc@{}}
\toprule
$n+1$  & {Div. diff.} &  $ L \setminus f$ & {\verb"TNSolve"$(BD(L),f)$}\\
\midrule
 15  & $1.5e-16$&   $9.2e-16$ & $2.5e-16$ \\
 25  & $7.0e-16$&   $5.6e-14$ & $7.2e-16$ \\
 50  & $8.1e-14$&   $2.4e-09$ & $7.5e-14$ \\
 100 & $3.2e-08$&   $7.5e+01$ & $3.3e-08$ \\
\botrule
\end{tabular}
\end{table}

{\bf Computation of singular values of $L$ to HRA.} In the second numerical experiment, the precision achieved by two methods to compute the lowest singular values of  $L$ is analyzed. On the one hand, we computed the lowest singular value of $L$ by  means of the routine \verb"TNSingularValues", using  $BD(L)$ for increasing and $BD(L_{J})$ for decreasing order. Notice that, for nodes in  decreasing order, the singular values of $L_J$  coincide with those of $L$ since $J$ is unitary. On the other hand, the lowest singular value of $L$ was computed with the Matlab command \verb"svd". 

As expected, when the bidiagonal decomposition approach is used, the singular values are computed to high relative accuracy for every dimension $n$, despite the ill-conditioning of the matrices and in contrast with the standard Matlab procedure.
\begin{table}[ht]\centering
\caption{\label{TableVSL}  Relative errors when computing the lowest singular value of $L$ for equidistant nodes in an interval of unit-length in increasing and decreasing order.}
\begin{tabular}{@{}ccccc@{}}
\toprule
\multicolumn{1}{l}{}  & \multicolumn{2}{c}{ $t_1<\cdots<t_{n+1}$ } & \multicolumn{2}{c}{ $t_1>\cdots>t_{n+1}$ }  \\
\midrule
 $n+1$ & \verb"svd"$(L)$ & {\verb"TNSingV"$(BD(L))$} & \verb"svd"$(L)$ & {\verb"TNSingV"$(BD(L_{J}))$} \\
\midrule
 15 & $7.3e-12$ & $6.6e-16$ & $5.9e-12$ & $5.7e-16$ \\
25 & $1.0e-7$ & $5.2e-16$ & $1.7e-7$ & $4.3e-15$ \\
50 & $1.7e+1$ & $5.1e-16$ & $1.9e+1$ & $8.3e-15$ \\ 
100 & $1.4e+12$ & $1.4e-15$ & $5.1e+13$ & $2.6e-16$ \\
\botrule
\end{tabular}
\end{table}

{\bf Computation of inverse of $L$ to HRA.} Finally, to show another application of the bidiagonal decompositions presented in this work, the inverse of the considered collocation matrices were computed. We  used  two different procedures: the  Matlab \verb"inv" command   and the function \verb"TNInverseExpand"  with our bidiagonal decompositions $BD(L)$ for increasing and $BD(L_{J})$ for decreasing order. To compute the inverse for the decreasing order case to HRA, note that first the inverse of $L_J$ is obtained and then $L^{-1}$ is recovered with $L ^{-1}= JL_{ J}^{-1}$.

It should be pointed out that formula (13) in \cite{CKP} allows to compute $L^{-1}$ to high relative accuracy for any order of the nodes.
Displayed in Table \ref{TableInverses}, the results of these numerical experiments show that our bidiagonal decomposition approach preserves high relative accuracy for any of the values of $n$ tested, in stark contrast with the standard Matlab routine.
\begin{table}[ht]\centering
\caption{\label{TableInverses} Relative errors when computing the inverse of collocation matrices of the Newton basis with equidistant nodes in a unit-length interval in increasing and decreasing order.}
\begin{tabular}{@{}ccccc@{}}
\toprule
\multicolumn{1}{l}{}  & \multicolumn{2}{c}{ $t_1<\cdots<t_{n+1}$ } & \multicolumn{2}{c}{ $t_1>\cdots>t_{n+1}$ }  \\
\midrule
 $n+1$ & \verb"inv"$(L)$ & {\verb"TNInvEx"$(BD(L))$} & \verb"inv"$(L)$ & {$J\cdot$\verb"TNInvEx"$(BD(L_{J}))$} \\
\midrule
 15 & $2.5e-13$ & $1.3e-15$ & $1.5e-13$ & $5.7e-16$ \\
25 & $8.8e-11$ & $4.8e-15$ & $1.4e-10$ & $8.8e-16$ \\
50 & $2.7e-3$ & $8.7e-15$ & $2.9e-3$ & $2.5e-15$ \\ 
100 & $1.7e+12$ & $6.8e-15$ & $2.8e+12$ & $5.1e-16$ \\
\botrule
\end{tabular}
\end{table}

\backmatter

\section*{Declarations}

\bmhead{Availability of supporting data} The authors confirm that the data supporting the findings of this study are available within the manuscript. The Matlab and Mathematica codes to run the numerical experiments are available upon request.
\bmhead{Competing interests} The authors have no competing interests to declare that are relevant to the content of this article.
\bmhead{Funding} This work was partially supported through the Spanish research grants PID2022-138569NB-I00 and RED2022-134176-T (MCI/AEI) and by Gobierno de Arag\'{o}n (E41$\_$23R, S60$\_$23R).
\bmhead{Authors' contributions} The authors contributed equally to this work.


\end{document}